\documentclass[12pt,a4paper]{article}
\usepackage{amsmath}
\usepackage{amsthm}
\usepackage{amssymb}
\usepackage{graphicx}
\usepackage{tabularx}
\usepackage{color}
\usepackage{setspace}
\usepackage{capt-of}
\usepackage{float}
\usepackage{amssymb,latexsym,amscd,graphicx}
\usepackage{color}
\oddsidemargin 0pt
\evensidemargin 0pt

\newtheorem{cor}{Corollary}[section]
\newtheorem{thm}{Theorem}[section]

\theoremstyle{definition}
\newtheorem{defn}{Definition}[section]
\theoremstyle{remark}

\definecolor{orange}{rgb}{1,0.5,0}
\usepackage{graphicx}
\graphicspath{{converted_graphics/}}

\setlength{\parindent}{0mm}
\usepackage[top=2 cm,bottom=2 cm,left=1.5 cm,right=1.5 cm]{geometry}
\usepackage{multicol}
\usepackage{hyperref}
\usepackage{setspace}
\setstretch{1}
\hypersetup{
	colorlinks,
	citecolor=blue,
	filecolor=black,
	linkcolor=black,
	urlcolor=black
}
\begin{document}
	
\title{Boundary values and zeros of Harmonic Product of Complex-valued Harmonic Functions in a simply connected bounded Domain}

	\author{ Ayantu Guteta Fite and Hunduma Legesse Geleta  }        
	
	\maketitle
	
 First author : Ayantu Guteta Fite \\
  Corresponding author: Hunduma Legesse Geleta\\
  
\textbf{Department of Mathematics, College of Natural and Computational Sciences, Addis Ababa University, Addis Ababa, Ethiopia }\\

\textbf{Email}: {ayantu.guteta@aau.edu.et} ( {first author})\\
 \textbf{Email}: { hunduma.legesse@aau.edu.et} ({Corresponding author})

\pagenumbering{arabic}
	

	\date{\today}          
	\maketitle
	\vspace{2mm}
\pagenumbering{arabic}
     
     \vspace{3mm} 
	
	\begin{abstract} 
	The product of two complex-valued harmonic function is not in general complex-valued harmonic function. In this paper we show that if a complex-valued harmonic function is the product of two complex-valued harmonic functions, then it is the difference of two squares, one is analytic and the other is co-analytic. As a result of this, if one of the factors of the product is known, then the other factor is expressed in terms of the known factor explicitly.  As an application of this we determine the boundary value of one of the factors and the product provided the boundary value of the other factor is known. It is shown that the boundary value of the product is a pure imaginary constant. Moreover, if such a product and factors are complex-valued harmonic polynomials, then the number of  zeros of the product is at most half of the square of its degree which is by half smaller than what is known in the literature about the maximum number of zeros of complex-valued harmonic polynomials. The feature of this paper is to explore multipliers for some subspace of complex-valued harmonic functions and determine some nontrivial invariant subspace. 
	\end{abstract}
	
	\maketitle
	
	\textbf{Keywords/phrases}: {Complex-valued Harmonic Functions; dilatation; Harmonic product; Boundary value; Zeros of polynomials.}
	
	\def\theequation{\thesection.\arabic{equation}}
	\section{Introduction}
	\setcounter{equation}{0}

In 1999  W. Hayman and et al \cite{WH} determined when the product of two non-constant, real-valued harmonic functions in a domain $\Omega$ in the complex plane is harmonic. It was asked in the  year 1998 by W. Hayman himself as cited in \cite{RM}.  Raymond Mortini in \cite{RM} also tried to answer the question of when the product of complex-valued harmonic functions, is harmonic. He uses what he called $\mathbb{C}$-harmonic conjugates. In this work we extend these works by using different approaches and explore more  conditions under which the product of complex-valued harmonic functions is complex-valued harmonic function and characterize in terms of dilatation and boundary values of the factors. As an auxiliary result we also determine the maximum number of zeros of the product provided the factors are complex-valued harmonic polynomials. We hope such topics become potential for the future research for instance in the study of multipliers and invariant subspace in the space of complex-valued harmonic functions.  \\
	
Complex-valued harmonic functions have been introduced in 1984 Clunie and Sheil-Small \cite{JT}. The family of complex-valued harmonic functions $f = u + iv$ defined in the unit disk $\mathbb{D} = \{z : |z| < 1\},$ where $u$ and $v$ are real harmonic functions in $\mathbb{D},$ but not necessarily harmonic conjugates. In any simply connected sub-domain $G \subset \mathbb{C}$ a complex-valued harmonic function $f$ can be decomposed as
 
		$$f(z) = h(z) + \overline{g(z)}$$
  where $g$ and $h$ are analytic \cite{duren2001univalent}. This family of complex-valued harmonic functions is a generalization of analytic mappings studied in geometric  function theory, and much research has been  done investigating the   properties of  these harmonic functions. For an  overview of the \bigskip topic, see Duren \cite{duren2004harmonic} and Dorff and Rolf \cite{dorff2012anamorphosis}. 

Since then, the theory of complex-valued harmonic functions and its subclass complex-valued harmonic univalent  functions attracted attention of complex analysts and the theory has rapidly developed and became an active branch of complex analysis for researchers\cite{dorff2012anamorphosis}.  Some areas of investigation that has  become of interest is the number and location of zeros of complex-valued harmonic polynomials; the boundary values and dilatation of complex-valued harmonic functions on bounded simply connected domains.\\

Many types of physical problems require solutions of Laplace's equation, and there exists a wide variety of solutions containing many different kinds of functions. However, a specific physical problem usually asks for a solution that is defined in a certain region and satisfies a given condition on the boundary of that region. The analysis of harmonic function is a natural extension of the study of analytic functions. Harmonic functions show up in many branches of physics where Laplace's equation appears, which include electricity magnetism and quantum mechanics. Therefore, mathematical research into these function is motivated not only by their mathematical relevance but also their applications to physical systems \cite{RS}.\\

Another area of investigation that has  become of interest is the boundary behavior of a complex-valued harmonic functions since they solve boundary value problems, known as the Dirichlet problem. It is the problem of finding a function that is continuous on the closed region, and is harmonic in the interior of the region. One version of  it can be  stated as follows: Given a prescribed function $\phi$ on the boundary of the region, interpreted as the distribution of heat on the boundary, find harmonic function u on the region such that $u = \phi$ on the boundary of the region.\\
 
Univalent harmonic mappings have attracted  the attention of complex analysts  after the appearance of the basic paper by Clunie and Sheil-Small~\cite{JT} in 1984. The work of these researchers and several others gave rise to several fascinating problems, conjunctures , and many tantalizing but perplexing questions. Though several researchers solved some of these problems  and conjunctures, yet many  perplexing questions are still unanswered and need to be investigated. For more information, concerning the determination of when the product of two non-constant complex-valued harmonic function is harmonic we refer to W. Hayman and et al \cite{WH} and  Raymond Mortini \cite{RM}; concerning the Boundary values we refer to Laugesen \cite{RS},Bshouty, D. and et al \cite{AD},  W. Hengartner and  G. Schober \cite{WG}; concerning the zeros of complex-valued harmonic polynomials we refer to  Bshouty ~\cite{bshouty2010problems}, Brilleslyper et al. \cite{brilleslyper2020zeros},  Kennedy \cite{kennedy1940bounds},  Dehmer \cite{dehmer2006location} and H.Legesse Geleta  and Alemu, O. A. \cite{geleta2022} on the number and location of the zeros of complex-valued harmonic polynomials in general and on the number and location of the zeros of trinomials in particular.\\

 Motivated by these works, we try to answer the following questions:\\

\begin{itemize}
\item[Problem 1.] When the product of two complex-valued harmonic functions is complex-valued harmonic function?
\item[Problem 2.]What is the relation among the boundary values of such product and its factors? 
\item[Problem 3.] If the factors are polynomials, then what is the maximum number of zeros of such product?\\
\end{itemize}

Throughout this article, $\mathbb{C}$, $\Omega$, $\partial\Omega,$   $\mathbb{D}$, $\partial\mathbb{D}$ and $\omega$  denote the complex plane, region in the complex plane, boundary of the region, the open unit disk, the unit circle and the dilatation respectively.\\

 Therefore, the purpose of this paper is to characterize factors and products of complex-valued harmonic functions, study their  boundary behaviors and  in particular study the zeros of such product provided it is the product of two polynomials. The feature of this paper is to explore multipliers for some subspace of complex-valued harmonic functions and determine some nontrivial invariant subspace.

Specifically it is shown that, if $f, F$ and $fF$ are complex-valued harmonic functions  on a bounded simply connected domain in the complex plane and $f(z)=h(z)+\overline{g(z)}, $ then
$F(z)= \alpha h(z)-\alpha \overline{g(z)}$ and $(fF)(z)= \alpha h^2(z)-\alpha \overline{g}^2(z),$ where $\alpha$ is nonzero real number.

More over if $f$ has the following property,

\[	
\begin{cases}
		\Delta f(z) = 0   & \text{if } z\in \mathbb{D} \\
		f(z)=\phi_1(z)+i\phi_2(z)   & \text{if } z \in \partial \mathbb{D}
	\end{cases}
	\]
then $F$ and $fF$ satisfy the following properties respectively, 

\[	
\begin{cases}
		\Delta F(z) = 0  & \text{if } z\in \mathbb{D} \\
		F(z)=\phi_2(z)+i\phi_1(z)   & \text{if } z \in \partial \mathbb{D}
	\end{cases}
	\]

	\[	
\begin{cases}
		\Delta (fF)(z) = 0  & \text{if } z\in \mathbb{D} \\
		(fF)(z)= i\alpha (\phi_1^2(z)+\phi_2^2(z))   & \text{if } z \in \partial \mathbb{D}.
	\end{cases}
	\]
It is also shown in particular, that if such product is the product of complex-valued harmonic polynomials, then  the number of zeros of the product $fF$ is half of the square of its degree.\\	
	
 This paper is organized  as follows: In section 2, we review the works of different scholars which may be used in the main results. In section 3, we state and prove our results. In section 4, we come up with conclusion.

   \section{Preliminaries}
   In this section we review some important concepts and results that we will use later on to prove the main results. \\
   
   \begin{defn}(\cite{RM})
   Every harmonic mapping $v$ on $\Omega$ making $u+iv$ holomorphic, is called a $\mathbb{C}$-harmonic conjugate of of $u.$
   \end{defn}
   \begin{thm}(as cited in \cite{RM})If $uv$ is harmonic on $\Omega,$ then$u$ has a harmonic conjugate $\tilde{u}$ on $\Omega$ and there are real constants $\alpha$ and $\beta$ such that $v=\alpha \tilde{u}+\beta. $

   \end{thm}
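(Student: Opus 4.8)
The plan is to translate the hypothesis ``$uv$ is harmonic'' into a first-order orthogonality condition on the gradients of the two factors, and then to show that this condition forces $v$ to coincide, up to an affine normalization, with the harmonic conjugate of $u$. Throughout I work in the setting of the cited result, where $u$ and $v$ are non-constant real-valued harmonic functions on the (simply connected) domain $\Omega$.

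First I would compute the Laplacian of the product. Since $\Delta(uv) = u\Delta v + v\Delta u + 2\,\nabla u\cdot\nabla v$ and both $u$ and $v$ are harmonic, the first two terms vanish, leaving $\Delta(uv) = 2(u_x v_x + u_y v_y)$. Hence $uv$ is harmonic on $\Omega$ if and only if $\nabla u\cdot\nabla v \equiv 0$, i.e. the gradients of the two factors are everywhere orthogonal. This is the single algebraic consequence of the hypothesis that drives the whole argument.

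Next, since $\Omega$ is simply connected, $u$ admits a real harmonic conjugate $\tilde u$, characterized by the Cauchy--Riemann equations $\tilde u_x = -u_y$ and $\tilde u_y = u_x$; in particular $\nabla\tilde u = (-u_y, u_x)$ is $\nabla u$ rotated by a right angle, so $\{\nabla u, \nabla\tilde u\}$ is an orthogonal frame at every point where $\nabla u\neq 0$. Because $\nabla v$ is orthogonal to $\nabla u$, at such points I can write $\nabla v = \lambda\,\nabla\tilde u$ for a scalar function $\lambda$, that is $v_x = -\lambda u_y$ and $v_y = \lambda u_x$. The crux is to prove $\lambda$ is constant. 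Differentiating these two relations and using $v_{xy}=v_{yx}$ together with $\Delta u = 0$ yields $\lambda_x u_x + \lambda_y u_y = 0$, while imposing $\Delta v = 0$ yields $-\lambda_x u_y + \lambda_y u_x = 0$. Viewing these as a homogeneous linear system in $(\lambda_x,\lambda_y)$, the coefficient determinant equals $u_x^2 + u_y^2 = |\nabla u|^2$, which is nonzero away from the critical points of $u$. Thus $\nabla\lambda = 0$ there, so $\lambda$ is locally constant on the open set $\{\nabla u\neq 0\}$.

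Finally I would remove the critical points. The function $g := u_x - iu_y$ is analytic, since it is the complex derivative of $u+i\tilde u$, and $u$ non-constant forces $g\not\equiv 0$, so its zeros---exactly the critical points of $u$---are isolated. The complement of an isolated set in the domain $\Omega$ is open, connected and dense, so the locally constant $\lambda$ is in fact a single real constant $\alpha$ on that set, whence by continuity $\nabla v = \alpha\,\nabla\tilde u$ throughout $\Omega$. Integrating gives $v = \alpha\tilde u + \beta$ with $\alpha,\beta\in\mathbb{R}$ (real because $u,v,\tilde u$ are real-valued). I expect the orthogonality computation to be routine; the main obstacle is precisely this last step of promoting ``$\lambda$ locally constant off the critical set'' to ``$\lambda$ globally constant,'' which genuinely uses both the non-constancy of $u$ (to make the critical points isolated) and the connectedness of $\Omega$.
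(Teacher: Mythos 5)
The paper offers no proof of this statement at all: it is quoted as a preliminary, ``as cited in \cite{RM},'' and goes back to Hayman, Amdeberhan and Bivens \cite{WH}, so there is no in-paper argument to compare yours against line by line. Judged on its own, your proof is correct and is essentially the classical real-variable argument for this result. The identity $\Delta(uv)=2\,\nabla u\cdot\nabla v$ for harmonic factors, the reduction $\nabla v=\lambda\,\nabla\tilde u$ off the critical set, the two relations $\lambda_x u_x+\lambda_y u_y=0$ (from $v_{xy}=v_{yx}$ together with $\Delta u=0$) and $-\lambda_x u_y+\lambda_y u_x=0$ (from $\Delta v=0$), the coefficient determinant $|\nabla u|^2$, and the removal of the critical points via the isolated zeros of the analytic function $u_x-iu_y=(u+i\tilde u)'$ all check out. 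For comparison, Mortini's own route (reflected in the paper's use of $\mathbb{C}$-harmonic conjugates) is the Wirtinger-calculus version of the same idea: harmonicity of $uv$ gives $\mathrm{Re}\,(u_z\overline{v_z})=0$, so $v_z=i\mu\,u_z$ with $\mu$ real, and the ratio $v_z/u_z$ of analytic functions, taking purely imaginary values, is constant by the open mapping theorem; your gradient computation is the elementary translation of this, at the cost of handling the critical set by hand.

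One genuine caveat: the theorem as stated asserts, as part of its \emph{conclusion}, that $u$ has a single-valued harmonic conjugate on $\Omega$, where $\Omega$ is an arbitrary domain; you instead \emph{assume} $\Omega$ simply connected to manufacture $\tilde u$ at the outset, so strictly you prove a weaker statement. The gap closes with your own machinery: the rotated field $W=(-u_y,u_x)$ is globally defined without reference to $\tilde u$, your argument shows $\nabla v=\alpha\,W$ on all of $\Omega$ for a single constant $\alpha$, and non-constancy of $v$ forces $\alpha\neq 0$; hence $v/\alpha$ is itself a global single-valued potential for $W$, i.e.\ a harmonic conjugate of $u$, and $v=\alpha\tilde u+\beta$ follows for any normalization of the conjugate. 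Within this paper's setting (simply connected bounded domains, per its title) your extra assumption is harmless, but you should flag it if you intend to prove the cited statement in full generality.
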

   \begin{thm}(\cite{RM})Let $u$ and $v$ be two non-constant complex-valued harmonic functions on $\Omega$. Suppose that $uv$ is harmonic. Then either both $u$ and $v$ are holomorphic or anti-holomorphic on $\Omega$ or $u$ has a normalized  $\mathbb{C}$-harmonic conjugate $\tilde{u}$ on $\Omega$ such that there are constants $\alpha$ and $\beta$ for which $ v=\alpha \tilde{u}+\beta.$ \\
   Conversely, let $u$ be a harmonic mapping, $\tilde{u}$ a normalized $\mathbb{C}$-harmonic conjugate of $u$ and let $ v=\alpha \tilde{u}+\beta.$ Then $v$ and $uv$ are harmonic.
   
     If $uv$ is harmonic on $\Omega,$ then$u$ has a harmonic conjugate $\tilde{u}$ on $\Omega$ and there are real constants $\alpha$ and $\beta$ such that $v=\alpha \tilde{u}+\beta. $

   \end{thm}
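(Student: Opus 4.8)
The plan is to reduce the harmonicity of the product to a single pointwise identity between the analytic and co-analytic parts of the factors, and then to exploit the rigidity of any equation of the form ``holomorphic $=$ anti-holomorphic.'' First I would fix the canonical decompositions $u=h+\overline{g}$ and $v=p+\overline{q}$ with $h,g,p,q$ holomorphic on the simply connected domain $\Omega$, and compute the Laplacian via $\Delta=4\,\partial_{z}\partial_{\bar z}$. Since $\partial_{\bar z}h=\partial_{\bar z}p=0$ and $\partial_{z}\overline{g}=\partial_{z}\overline{q}=0$, differentiating $uv=hp+h\overline{q}+p\,\overline{g}+\overline{gq}$ once in $\bar z$ and once in $z$ makes every purely (anti-)holomorphic term drop out and leaves
\begin{equation}\label{eq:prodlap}
\tfrac14\,\Delta(uv)=h'\,\overline{q'}+p'\,\overline{g'}.
\end{equation}
Hence $uv$ is harmonic on $\Omega$ if and only if $h'\overline{q'}+p'\overline{g'}\equiv0$, and the entire theorem becomes an analysis of this identity.

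Next I would dispose of the degenerate cases. If $g'\equiv0$ then $h'\not\equiv0$ (otherwise $u$ is constant), and \eqref{eq:prodlap} forces $h'\overline{q'}\equiv0$, so $q'\equiv0$ and both $u,v$ are holomorphic; symmetrically, if $h'\equiv0$ then $g'\not\equiv0$ and the identity gives $p'\equiv0$, so both $u,v$ are anti-holomorphic. This produces the first alternative in the statement. In the remaining case all four derivatives are $\not\equiv0$, and on the open set $U$ obtained by deleting the isolated zeros of $p'$ and $q'$ I would rewrite \eqref{eq:prodlap} as $h'/p'=-\overline{g'/q'}$. The left-hand side is holomorphic on $U$ and the right-hand side is anti-holomorphic on $U$; a function annihilated by both $\partial_{z}$ and $\partial_{\bar z}$ on a connected open set is constant, whence $h'/p'=c$ and $g'/q'=-\overline{c}$ throughout $\Omega$. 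This rigidity step is the heart of the matter and the point where the $z$- and $\bar z$-worlds decouple.

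Integrating the constant relations yields $h=cp+a_{1}$ and $g=-\overline{c}\,q+a_{2}$, hence $v=p+\overline{q}=\tfrac1c(h-\overline{g})+\beta$ for a constant $\beta$. At this stage I would use the explicit normalized $\mathbb{C}$-harmonic conjugate $\widetilde u=-i(h-\overline{g})$, which satisfies $u+i\widetilde u=2h$ (holomorphic) and whose additive holomorphic ambiguity is removed by the normalization; since $h-\overline{g}=i\widetilde u$, this gives $v=\alpha\widetilde u+\beta$ with $\alpha=i/c\neq0$, the second alternative. For the converse I would simply run the computation backwards: writing $v=\alpha\widetilde u+\beta=-i\alpha h+i\alpha\overline{g}+\beta$ gives $p'=-i\alpha h'$ and $\overline{q'}=i\alpha\,\overline{g'}$, so the right-hand side of \eqref{eq:prodlap} becomes $i\alpha h'\overline{g'}-i\alpha h'\overline{g'}=0$, proving that both $v$ and $uv$ are harmonic.

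The main obstacle I anticipate is not the algebra but the careful justification of the rigidity step: one must argue that the isolated zeros of $p'$ and $q'$ do not obstruct the passage from the identity $h'/p'=-\overline{g'/q'}$ on $U$ to genuine holomorphic identities $h'=cp'$ and $g'=-\overline{c}\,q'$ on all of $\Omega$, and that $U=\Omega\setminus\{\text{isolated points}\}$ remains connected so that ``holomorphic and anti-holomorphic $\Rightarrow$ constant'' may be applied globally rather than only locally. A secondary point requiring care is fixing the normalization of $\widetilde u$ precisely enough that the additive constant, and hence the constants $\alpha$ and $\beta$, are unambiguously determined.
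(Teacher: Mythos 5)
Your argument is correct, but note that the paper contains no proof of this statement to compare against: it is quoted verbatim in the Preliminaries as background from Mortini \cite{RM}. The closest in-paper analogue is the proof of the paper's own Theorem 3.2, and relative to that your route is genuinely different and noticeably more careful. You differentiate the product twice and reduce everything to the invariant identity $\tfrac14\Delta(uv)=h'\overline{q'}+p'\overline{g'}\equiv 0$, then apply the rigidity ``holomorphic $=$ anti-holomorphic on a connected open set $\Rightarrow$ constant'' to $h'/p'=-\overline{g'/q'}$ on $\Omega$ minus the discrete zero sets of $p'$ and $q'$; since a domain minus a discrete set stays connected and the resulting identities $h'=cp'$, $g'=-\overline{c}\,q'$ extend across the deleted points by continuity and the identity theorem, the division step is fully legitimate. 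The paper's Theorem 3.2, by contrast, asserts that the harmonic function $k=h\overline{G}+H\overline{g}$ must be analytic or anti-analytic (a false dichotomy for a general harmonic function, which is a sum of both) and then equates ratios such as $H/h=-\overline{G'}/\overline{g'}$ to constants declared real without justification; your version compares derivatives with derivatives, obtains a genuinely complex constant $c$, and correctly yields $\alpha=i/c$, matching Mortini's statement, which allows complex constants. Your identification $\widetilde u=-i(h-\overline g)$ with $u+i\widetilde u=2h$, and the converse computation $p'=-i\alpha h'$, $\overline{q'}=i\alpha\overline{g'}$, are both right; moreover, since only the globally defined derivatives $u_z=h'$ and $\overline{u_{\bar z}}=g'$ enter your identity, the simple-connectivity assumption is needed only for the global decomposition and could be relaxed. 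Two small repairs: in the nondegenerate case, justify explicitly that $p'\not\equiv0$ and $q'\not\equiv0$ --- e.g.\ if $p'\equiv0$ your identity gives $h'\overline{q'}\equiv0$, hence $q'\equiv0$ off the isolated zeros of $h'$ and so everywhere, making $v$ constant, contrary to hypothesis; and observe that the detached final sentence of the statement (a harmonic conjugate with \emph{real} constants $\alpha,\beta$) is a stray duplicate of the paper's Theorem 2.1, i.e.\ the real-valued theorem of Hayman et al.\ \cite{WH}, which your complex-valued argument neither needs nor delivers, since your $\alpha=i/c$ is in general not real.
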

  \begin{defn}.
A complex valued harmonic functions $f(z)$ = $h(z)+\overline{g(z)}$ is said to be sense preserving at $z_{0}$ if $J_{f}(z_{0})$ $> 0$ and is sense reveserving at $z_{0}$ if $J_{f}(z_{0})$ $< 0$, where  $J_{f}(z_{0})$ is the jacobian of $f$ which is given by \\
   $$J_{f}(z_{0})= |f_{z}|^2-|f_{\overline{z}}|^2 = |h'|^2-|g'|^2$$

 Its dilatation $w$ is given by $\omega(z) =  \frac{g'(z)}{h'(z)}.$ 
 The dilatation may be interpreted geometrically to represent the "stretching" or "distortion " of $f$. Dilatation $\omega$ measures how far $f$ is from being conformal. The mapping $f$ is conformal if and only if $\omega = 0.$
\end{defn}

   The next theorem is from well known Analysis book by Walter Rudin, provides the solution of a boundary value problem-the Dirichlet problem: A continuous function $f$ is given on the boundary of the unit disk and it is required to find a harmonic function $F$ on tn the disk whose boundary values are $f$. 
    \begin{thm} (Theorem 11.9-Walter Rudin \cite{WR})
    Suppose $u$ is a continuous real function on the closed unit disk,     ~~$\overline{\mathbb{D}}$ and suppose $u$ is harmonic in $\mathbb{D}$. Then in the disk $u$ is the Poisson integral of its restriction to the boundary, and $u$ is the real part of the holomorphic function
     $$f(z)=\frac{1}{2\pi}\int_{-\pi}^{\pi}\frac{e^{it}+z}{e^{it}-z}u(e^{it})dt,  ~~~ z\in \mathbb{D}.$$  
    \end{thm}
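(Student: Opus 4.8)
The plan is to prove this by the classical Poisson-integral method: construct a candidate harmonic function out of the prescribed boundary data, verify that it is harmonic with the correct boundary values, and then invoke uniqueness of the Dirichlet problem to identify it with $u$. Writing $z = re^{i\theta}$, set $P[u](z) = \frac{1}{2\pi}\int_{-\pi}^{\pi}\mathrm{Re}\!\left(\frac{e^{it}+z}{e^{it}-z}\right)u(e^{it})\,dt$. A short computation shows that $\mathrm{Re}\,\frac{e^{it}+z}{e^{it}-z} = \frac{1-r^2}{1-2r\cos(\theta-t)+r^2} =: P_r(\theta-t)$, the Poisson kernel. Thus $P[u]$ is exactly the real part of the holomorphic function $f$ appearing in the statement, and the whole argument reduces to proving that $u = P[u]$ on $\mathbb{D}$.

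First I would check that $f(z) = \frac{1}{2\pi}\int_{-\pi}^{\pi}\frac{e^{it}+z}{e^{it}-z}u(e^{it})\,dt$ is holomorphic on $\mathbb{D}$. Since the integrand is holomorphic in $z$ for each fixed $t$ and depends continuously on $t$, one may differentiate under the integral sign; equivalently one can expand $\frac{e^{it}+z}{e^{it}-z} = 1 + 2\sum_{n\ge 1}e^{-int}z^n$, a series converging uniformly on compact subsets of $\mathbb{D}$. Either way $f$ is holomorphic, and consequently $\mathrm{Re}\,f = P[u]$ is harmonic on $\mathbb{D}$.

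The main step is to show that $P[u]$ extends continuously to $\overline{\mathbb{D}}$ with $P[u]\to u$ on $\partial\mathbb{D}$. This rests on the approximate-identity properties of the Poisson kernel: $P_r(\theta) > 0$, $\frac{1}{2\pi}\int_{-\pi}^{\pi}P_r(\theta)\,d\theta = 1$, and $\sup_{\delta\le|\theta|\le\pi}P_r(\theta)\to 0$ as $r\to 1^-$ for every $\delta > 0$. Given $\theta_0$ and $\varepsilon > 0$, continuity of $u$ on $\partial\mathbb{D}$ furnishes $\delta$ with $|u(e^{it})-u(e^{i\theta_0})| < \varepsilon$ whenever $|t-\theta_0| < \delta$; splitting the difference $P[u](re^{i\theta}) - u(e^{i\theta_0})$ into the part near $\theta_0$ (controlled by $\varepsilon$ and the mass-one property) and the part away from $\theta_0$ (controlled by the uniform decay of $P_r$ together with the boundedness of $u$) gives the boundary limit. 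I expect this approximate-identity estimate to be the principal obstacle, since it is where the uniform continuity of $u$ and the concentration of the kernel must be combined carefully.

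Finally I would invoke uniqueness. Both $u$ and $P[u]$ are harmonic in $\mathbb{D}$, continuous on $\overline{\mathbb{D}}$, and agree on $\partial\mathbb{D}$, so their difference is harmonic in $\mathbb{D}$, continuous on $\overline{\mathbb{D}}$, and vanishes on the boundary. By the maximum principle for harmonic functions this difference is identically zero, whence $u = P[u]$ on $\mathbb{D}$. Combined with the identification $P[u] = \mathrm{Re}\,f$ from the first step, this shows that $u$ is the Poisson integral of its boundary restriction and that $u = \mathrm{Re}\,f$ for the stated holomorphic $f$, completing the proof.
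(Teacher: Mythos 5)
This statement is one of the paper's quoted preliminaries (Rudin, Theorem 11.9), and the paper gives no proof of it at all, so there is no in-paper argument to compare against; your proposal must be judged against the classical proof, and it is correct and complete in outline. Your skeleton --- identify $\mathrm{Re}\,\frac{e^{it}+z}{e^{it}-z}$ with the Poisson kernel $P_r(\theta-t)$, verify holomorphy of $f$ by the expansion $1+2\sum_{n\ge1}e^{-int}z^n$, prove that $P[u]$ extends continuously to $\overline{\mathbb{D}}$ with boundary values $u|_{\partial\mathbb{D}}$, and conclude $u=P[u]$ by the maximum principle applied to the difference --- is exactly the standard route, and you correctly flag the one delicate point, namely that the boundary limit must be taken as $z\to e^{i\theta_0}$ unrestrictedly, which your splitting handles because the uniform continuity of $u$ on $\partial\mathbb{D}$ makes the estimate uniform in $\theta_0$. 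The only difference from Rudin's own treatment is cosmetic: Rudin establishes the Dirichlet step (his Theorem 11.8) by first checking the result for trigonometric polynomials and then approximating a general continuous boundary function uniformly, using positivity and unit mass of the kernel to control the error, whereas you run the approximate-identity estimate directly; the two are interchangeable, with your version being the more self-contained and Rudin's trading the kernel estimate for Stone--Weierstrass. No gap; the proof stands.
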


     An important tool in finding  the solution of the Dirichlet Problem is the Poisson integral formula, which can be formulated as follows:   \\
 
 Let $\Omega$ be a region in the complex plane and  $f^*: \partial\Omega\rightarrow\mathbb{C}$ be continuous. Then there is a continuous function $f: \overline{\Omega}\rightarrow\mathbb{C}$ such that $f^*(z) = f(z)$ for all $z\in \partial\Omega$ and $f$ is harmonic in $\Omega.$ In particular, if $f^*(e^{i\theta})$ is a Lebesgue  integrable function on $\partial\mathbb{D},$  then the poisson integral $$f(z) = \frac{1}{2\pi}\int_{-\pi}^{\pi}P(r,\varphi-\theta)f^*(e^{i\varphi})d\varphi,~~ z=e^{i\theta}\in\partial\mathbb{D},$$ where $P(r,\theta)$ is the poisson kernel of $\mathbb{D}$,  is the solution for the Dirchlet problem; moreover, it is harmonic mapping of $\mathbb{D}$ ~\cite{bshouty2010problems}.\\
 
 It is well known that the Fundamental Theorem of Algebra stipulates that, a polynomial of degree $n>0$ in one variable has at most $n$ zeros counting multiplicities.  Note that the Fundamental Theorem of Algebra gives us an upper bound on the total number of roots of a polynomial.  A polynomial may not achieve the maximum allowable number of roots given by the Fundamental Theorem of Algebra, but this bound is sharp, in the sense that given a natural number $n$ there exist a polynomial of degree $n$ which has exactly $n$ zeros counting multiplicities.

   \begin{thm} (Wilmshurst \cite{WA})
   Let $h(z)$ and $g(z)$ be analytic polynomials of degree $n$ and $m$ respectively with $n > m.$ Then $f(z) = h(z) + \overline{g(z)},$ has at most $n^2$ zeros counting multiplicities. The case $n=m$ could have infinite number of  zeros.
   
   \end{thm}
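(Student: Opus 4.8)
The plan is to reduce the counting of zeros of the complex-valued harmonic polynomial $f(z) = h(z) + \overline{g(z)}$ to an intersection problem for two real algebraic plane curves and then invoke B\'ezout's theorem. Writing $z = x + iy$ and $\overline{z} = x - iy$, each of $h(z)$ and $\overline{g(z)}$ becomes a polynomial in the real variables $x$ and $y$, so that
\[
f(z) = U(x,y) + i\, V(x,y),
\]
where $U = \mathrm{Re}\, f$ and $V = \mathrm{Im}\, f$ are real bivariate polynomials. A point $z_0 = x_0 + iy_0$ is a zero of $f$ if and only if $(x_0, y_0)$ lies on both curves $\{U = 0\}$ and $\{V = 0\}$, so the number of zeros of $f$ equals the number of intersection points of these two curves counted with the appropriate multiplicity.

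First I would pin down the degrees of $U$ and $V$ as polynomials in $(x,y)$. Writing $h(z) = a_n z^n + \cdots$ with $a_n \neq 0$, the contribution of $h$ to $f$ has total degree $n$, while the contribution of $\overline{g(z)}$ has total degree $m < n$. This is precisely where the hypothesis $n > m$ enters: it guarantees that the terms of top degree $n$ in both $U$ and $V$ arise solely from the leading monomial $a_n z^n$, so that $\deg U = \deg V = n$ and their degree-$n$ homogeneous parts $U_n, V_n$ are determined by $a_n(x+iy)^n$.

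The key step is to show that $U_n$ and $V_n$ have no common polynomial factor. From $U_n + i V_n = a_n (x+iy)^n$ and $U_n - i V_n = \overline{a_n}(x - iy)^n$, any common factor of $U_n$ and $V_n$ would have to divide both $(x+iy)^n$ and $(x - iy)^n$; since $x + iy$ and $x - iy$ are distinct, coprime linear forms, this forces the common factor to be constant. Consequently the curves $\{U = 0\}$ and $\{V = 0\}$ share no common component (a common component would make their leading forms share a factor), which already guarantees that the intersection is finite, and moreover they have no common points on the line at infinity (coprime binary forms have no common projective zero). Passing to the projective closures over $\mathbb{C}$, B\'ezout's theorem then yields that the two curves meet in exactly $(\deg U)(\deg V) = n^2$ points counted with multiplicity; since none lie at infinity, every such intersection is affine, and every affine zero of $f$ is one of them, so $f$ has at most $n^2$ zeros counting multiplicities.

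I expect the main obstacle to be making the passage from the real affine picture to the complex projective setting fully rigorous, and in particular verifying that the multiplicity of a zero of $f$ in the sense appropriate to harmonic mappings is bounded by the complex intersection multiplicity of $\{U=0\}$ and $\{V=0\}$ at that point, so that the B\'ezout count genuinely bounds the weighted number of zeros. Finally, for the borderline case $n = m$ I would exhibit a simple example showing the conclusion can fail, e.g.\ $f(z) = z + \overline{z} = 2x$, whose zero set is the entire imaginary axis; here the leading forms are no longer coprime (indeed $\mathrm{Im}\, f \equiv 0$), so the finiteness argument collapses exactly as the theorem anticipates.
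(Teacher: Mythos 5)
The paper does not prove this statement at all: it is quoted in the Preliminaries as a known result from Wilmshurst's thesis \cite{WA}, so there is no in-paper proof to compare yours against. Your proposal is, in substance, the classical argument (and essentially Wilmshurst's own): identify the zeros of $f$ with the real intersections of the curves $\{\mathrm{Re}\,f=0\}$ and $\{\mathrm{Im}\,f=0\}$, use the hypothesis $n>m$ to see that the degree-$n$ leading forms satisfy $U_n+iV_n=a_n(x+iy)^n$ and $U_n-iV_n=\overline{a_n}(x-iy)^n$, deduce coprimality of $U_n,V_n$ from the coprimality of the linear forms $x+iy$ and $x-iy$, conclude that $U$ and $V$ share no component and that the projective closures do not meet at infinity, and apply B\'ezout over $\mathbb{C}$ to get the bound $n^2$. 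The logic is sound: coprimality of the leading forms does imply coprimality of $U$ and $V$ (the leading form of a common factor would divide both $U_n$ and $V_n$), and coprime binary forms over $\mathbb{C}$ indeed have no common projective zero, so all $n^2$ B\'ezout intersections are affine and isolated. Two small points deserve attention. First, you should verify that $U_n$ and $V_n$ are not identically zero (so that $\deg U=\deg V=n$); this is immediate since $U_n\equiv 0$ would force $a_n(x+iy)^n=-\overline{a_n}(x-iy)^n$, contradicting the linear independence of these forms. Second, the multiplicity issue you flag is the genuine technical subtlety in upgrading ``at most $n^2$ distinct zeros'' to ``at most $n^2$ zeros counting multiplicities'': one must check that the multiplicity of a zero of the harmonic polynomial (defined via local degree, and possibly negative at sense-reversing zeros, so one counts $|\cdot|$) is dominated by the complex intersection multiplicity of the two curves at that real point; this is standard but not free, and your honesty in isolating it is appropriate. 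Your example $f(z)=z+\overline{z}=2x$ for the case $n=m$ correctly shows the zero set can be a whole line, exactly as the statement anticipates.
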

   It was shown by Bshouty \textit{et al.} \cite{bshouty1995exact} that there exists  a complex-valued harmonic polynomial $ f = h + \overline{g},$ such that $h$  is an analytic polynomial of degree $n,$ $g$  is an analytic polynomial of degree $m \leq n$ and $f$ has exactly $n^2$ zeros counting with  multiplicities in the field of complex numbers,  $\mathbb{C}.$ \\ 
	\maketitle
\def\theequation{\thesection.\arabic{equation}}
\section{Main Results} 

	In this section we state and prove the main results of the paper. The first part of this section is results regarding products of complex-valued harmonic functions; the second part is results about the boundary values of the factors ans products; and the third part is an auxiliary result concerning the the maximum number of zeros of the product, provided the product and factors are complex-valued harmonic  polynomials.

	\setcounter{equation}{0}

\subsection{Product of harmonic functions}
	We know that the square of real harmonic function $u$ cannot be harmonic, unless $u$ is a constant. Of course the square of analytic function is analytic and hence harmonic as any analytic function is complex-valued harmonic function. Similarly the square of anti-analytic function is anti-analytic and hence complex-valued harmonic function. But for the complex-valued harmonic function which is neither analytic nor anti-analytic we have the following result
	\begin{thm}
	Suppose $f(z) = h(z) + \overline{g(z)},$ is harmonic on $\Omega.$ Then $f^2,$ is harmonic on $\Omega$ if and only if either $h$ or $g$ is a constant on $\Omega.$
	\end{thm}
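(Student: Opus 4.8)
The plan is to reduce the harmonicity of $f^2$ to a condition on a single cross term. Expanding, $f^2 = h^2 + 2h\overline{g} + \overline{g}^2$. Since $h$ is analytic, $h^2$ is analytic and hence harmonic; since $g$ is analytic, $\overline{g}^2 = \overline{g^2}$ is anti-analytic and hence harmonic. Therefore $f^2$ is harmonic on $\Omega$ if and only if the product $h\overline{g}$ is harmonic on $\Omega$, and the whole problem collapses to analyzing this one term.

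Next I would compute the Laplacian of $h\overline{g}$ using the Wirtinger form $\Delta = 4\,\partial_z\partial_{\overline z}$. Because $h$ is analytic (so $\partial_{\overline z} h = 0$) and $\overline{g}$ is anti-analytic (so $\partial_z \overline{g} = 0$), a short computation gives $\partial_{\overline z}(h\overline{g}) = h\,\overline{g'}$ and then $\partial_z\partial_{\overline z}(h\overline{g}) = h'\,\overline{g'}$, so that $\Delta(h\overline{g}) = 4\,h'\,\overline{g'}$. Hence $f^2$ is harmonic on $\Omega$ if and only if $h'(z)\,\overline{g'(z)} = 0$ for every $z\in\Omega$.

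The heart of the argument, and the step I expect to be the main obstacle, is to pass from the pointwise vanishing $h'\overline{g'}\equiv 0$ to the conclusion that one of $h$, $g$ is constant. Here I would invoke the identity theorem for analytic functions: the zero sets $Z(h') = \{z : h'(z)=0\}$ and $Z(g') = \{z : g'(z)=0\}$ are each either all of $\Omega$ or a discrete, hence countable, set of isolated points. The condition $h'\overline{g'}\equiv 0$ says precisely that $Z(h')\cup Z(g') = \Omega$; since $\Omega$ is open and therefore uncountable, it cannot be the union of two countable sets. Thus at least one of $Z(h')$, $Z(g')$ must equal $\Omega$, i.e. $h'\equiv 0$ or $g'\equiv 0$, which forces $h$ or $g$ to be constant on the connected domain $\Omega$.

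For the converse I would simply reverse the computation. If, say, $g\equiv c$ is constant, then $f^2 = h^2 + 2\overline{c}\,h + \overline{c}^2$, a sum of analytic functions and a constant, hence harmonic; the case of $h$ constant is symmetric, giving $f^2 = c^2 + 2c\,\overline{g} + \overline{g}^2$, a sum of anti-analytic functions and a constant. This closes the equivalence.
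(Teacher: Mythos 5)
Your proof is correct and follows essentially the same route as the paper: expand $f^2$, observe that $h^2$ and $\overline{g}^2$ are automatically harmonic, compute the mixed Wirtinger derivative to reduce everything to $h'\overline{g'}\equiv 0$ on $\Omega$, and conclude that $h$ or $g$ is constant. In fact your treatment is slightly more complete than the paper's: the passage from the pointwise identity $h'\overline{g'}\equiv 0$ to ``$h'\equiv 0$ or $g'\equiv 0$'' is simply asserted in the paper, whereas you justify it properly via the identity theorem (the zero set of a nonconstant analytic function is discrete, hence countable, so two such sets cannot cover the uncountable open set $\Omega$), using the connectedness of $\Omega$ exactly where it is needed.
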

	\begin{proof}
	If either $h$ or $g$ is a constant on $\Omega,$ then it easy to see that the square of analytic function or the square of co-analytic function is harmonic on $\Omega.$ Assume that the square of a complex-valued harmonic function is harmonic on $\Omega.$ Then $$f^2=(h + \overline{g})^2 = h^2 +\overline{g}^2+ 2h\overline{g} $$ is harmonic on $\Omega$ implies that, $$\frac{\partial^2 f^2}{\partial z\overline{z}} = 0.$$ From which we obtain, $$\frac{\partial^2 f^2}{\partial z\overline{z}} = 2\frac{\partial h}{\partial z}\frac{\partial \overline{g}}{\partial\overline{z} } =0, $$
	This on turn implies that either of the following two is satisfied on $\Omega,$ $$\frac{\partial h}{\partial z}= 0, ~~  ~~ \frac{\partial \overline{g}}{\partial\overline{z} } =0.$$ Hence either $h$ or $ \overline{g} $ is a constant $\Omega.$ Which is the desired result.
	
	\end{proof}
	
	\begin{thm}
	Suppose $f(z) = h(z) + \overline{g(z)},$ and $F(z) = H(z) + \overline{G(z)},$ are complex-valued  harmonic functions on $\Omega.$ Then $fF,$ is complex-valued harmonic function on $\Omega,$ if and only if  $k(z)= h(z)\overline{G}+H(z)\overline{g}$ is a constant.
	\end{thm}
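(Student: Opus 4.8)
The plan is to test harmonicity through the Wirtinger form of the Laplacian, $\Delta = 4\,\partial^2/(\partial z\,\partial\overline z)$, exactly as in the proof of Theorem 3.1, and to isolate the non-harmonic part of the product. First I would expand
$$fF = (h+\overline g)(H+\overline G) = hH + \overline{g}\,\overline{G} + \big(h\overline G + H\overline g\big).$$
Here $hH$ is a product of analytic functions, hence harmonic, and $\overline{g}\,\overline{G}=\overline{gG}$ is anti-analytic, hence harmonic; both are annihilated by $\partial^2/(\partial z\,\partial\overline z)$. Thus the entire obstruction to harmonicity of $fF$ is carried by the remaining term $k = h\overline G + H\overline g$, and one has $\partial^2(fF)/(\partial z\,\partial\overline z) = \partial^2 k/(\partial z\,\partial\overline z)$. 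The sufficiency direction is then immediate: if $k$ is constant, then $\partial k/\partial z = \partial k/\partial\overline z = 0$, so the mixed derivative vanishes and $fF$ is harmonic.

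For necessity I would differentiate $k$ explicitly. Since $h,H$ are analytic and $\overline g,\overline G$ anti-analytic,
$$\frac{\partial^2 k}{\partial z\,\partial\overline z} = \frac{\partial}{\partial z}\big(h\,\overline{G'} + H\,\overline{g'}\big) = h'\,\overline{G'} + H'\,\overline{g'},$$
so that $fF$ being harmonic forces the pointwise identity $h'\overline{G'} + H'\overline{g'} \equiv 0$ on $\Omega$.

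The main obstacle is the passage from this identity to the conclusion that $k$ is constant, since what the vanishing mixed derivative yields directly is only that $k$ is harmonic. To close this gap I would, away from the zeros of $H'$ and $G'$, rewrite the identity as $h'/H' = -\,\overline{g'/G'}$, equating a meromorphic function with an anti-meromorphic one; on a connected domain such a function is a constant $c$, whence $h'=cH'$ and $g'=-\overline c\,G'$, i.e. $h = cH + a$ and $g = -\overline c\,G + b$ for integration constants $a,b$. Substituting back collapses the cross terms and leaves $k = a\overline G + \overline b\,H$. This last expression is the crux I would scrutinize most carefully: it is always harmonic, but it is constant only under a normalization such as $a=b=0$, together with a separate treatment of the degenerate cases in which one of $h,g,H,G$ is itself constant. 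Settling exactly when ``$k$ is harmonic'' can be strengthened to ``$k$ is constant'' is where I expect the real work — and the precise reading of the hypotheses — to reside.
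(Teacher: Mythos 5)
Your reduction is exactly the paper's skeleton: expand $fF=hH+\overline{gG}+k$, observe that $hH$ and $\overline{gG}$ are automatically harmonic, so everything hinges on the cross term $k$, and get sufficiency for free. For necessity the paper takes precisely the step you declined to take: from ``$fF$ harmonic'' it asserts that ``either $k$ is analytic or anti-analytic,'' i.e.\ $k_{\overline z}\equiv 0$ or $k_z\equiv 0$ --- which does not follow from $k_{z\overline z}\equiv 0$, since a harmonic $k$ is in general $p+\overline q$ with both parts non-constant --- and then, incoherently, uses \emph{both} branches of that either/or at once to extract the four relations $H=\alpha h$, $G'=-\alpha g'$, $H'=\beta h'$, $G=-\beta g$, after which unspecified ``algebraic calculations'' are claimed to force the integration constants $c_1=c_2=0$ and hence $k\equiv 0$. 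Those silently discarded constants $c_1,c_2$ are exactly the $a,b$ in your residual expression $k=a\overline G+\overline b H$, so your analysis locates the paper's error precisely.

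And your suspicion about that residue is correct: it is not removable, because the necessity direction is false as stated. Take $H=G=z$, $c=1$, $a=b=1$ in your parametrization, i.e.\ $h=z+2$, $g=-z$, so that
\[
f(z)=z-\overline{z}+2,\qquad F(z)=z+\overline{z},\qquad (fF)(z)=z^2-\overline{z}^{\,2}+2z+2\overline{z}.
\]
Neither factor is analytic or anti-analytic, $fF$ is harmonic, yet $k=(z+2)\overline{z}-z\overline{z}=2\overline{z}$ is non-constant. What your argument actually proves is the correct equivalence: $fF$ is harmonic on $\Omega$ if and only if $h'\overline{G'}+H'\overline{g'}\equiv 0$, equivalently if and only if $k$ is harmonic. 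Constancy of $k$ can then be \emph{arranged} rather than deduced: since the splittings $f=h+\overline g$, $F=H+\overline G$ are unique only up to additive constants, one may (in the non-degenerate case $c\neq 0$) shift $h,g,H,G$ by constants so as to make $a=b=0$, and in the example above the renormalized split $h=z+1$, $g=-z+1$, $H=z+1$, $G=z-1$ indeed gives $k\equiv 0$. This is consistent with Mortini's theorem quoted in the paper's own Preliminaries, where the conjugate relation carries an additive constant $\beta$ that Theorem 3.2 of the paper drops; the same additive constants also invalidate, as stated, the paper's downstream conclusion $F=\alpha h-\alpha\overline g$. So: your sufficiency proof is complete, your necessity analysis is sharper than the paper's, and the gap you flagged is a genuine defect of the theorem and of the paper's proof, not of your argument --- to finish honestly one must either weaken ``constant'' to ``harmonic,'' or add a normalization of the decompositions and a separate treatment of the degenerate cases, exactly as you predicted.
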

	\begin{proof}
	If $k(z)$ is a constant then obviously $fF$ is harmonic. Now assume $fF$ is harmonic. Then either $k(z)$ is analytic or anti-analytic. This implies that either $k_{\overline{z}}(z)=0$ or $k_z(z)=0.$ This leads to, either $$ h(z)\overline{\frac{\partial}{\partial z} G}+H(z)\overline{\frac{\partial}{\partial z} g} =0$$ or $$\frac{\partial}{\partial z} h(z)\overline{G}+ \frac{\partial}{\partial z} H(z)\overline{g}=0$$ From which we obtain $$\frac{H(z)}{h(z)}=-\frac{\overline{\frac{\partial}{\partial z} G}}{\overline{\frac{\partial}{\partial z} g}}=\alpha$$ or $$ \frac{\frac{\partial}{\partial z} H(z)}{\frac{\partial}{\partial z} h(z)}= -\frac{\overline{G}}{\overline{g}}=\beta $$ For the ratios to be equal $\alpha$ and $\beta$ must be real non-zero constants. Hence we obtain the following, $$H(z)=\alpha h(z), ~~\overline{\frac{\partial}{\partial z} G}= -\alpha \overline{\frac{\partial}{\partial z} g},~~~~ \frac{\partial}{\partial z} H(z)= \beta \frac{\partial}{\partial z} h(z), ~~~~ \overline{G} = -\beta \overline{g} $$ This implies that $G(z)= -\alpha g(z)+ c_1,$ ~~  $H(z)= \alpha h(z),$ ~~$H(z)=\beta h(z) +c_2,$ and $G(z)= -\beta g(z),$ where $c_1$ and $c_2$ are constants.
	After some algebraic calculations we obtain $c_1=0=c_2$  and $\alpha = \beta$ which gives,
	$H(z)=\alpha h(z)$ and $\overline{G(z)} = -\alpha \overline{g(z)}  .$ Thus  $k(z)= h(z)\overline{G}+H(z)\overline{g}=   -\alpha h(z)\overline{g}+ \alpha h(z)\overline{g} = 0.$ This shows that $k(z)$ is a constant which is actually identically zero. Therefore, for the product $fF$ of two complex-valued harmonic functions $f$ and $F$ to be harmonic and if $f(z) = h(z) + \overline{g(z)},$ then $F(z)=\alpha h(z)+-\alpha \overline{g(z)}.$
	 
	\end{proof}
	
	\begin{cor}
	Let $f(z) = h(z) + \overline{g(z)}$ and $F(z)=\alpha h(z)+-\alpha \overline{g(z)}$ be complex valued harmonic functions on some region $\Omega$ in  $\mathbb{C}$ with dilatations, $w_f$ and $w_F$ respectively. Then if $\alpha$ is a non-zero real number, then $w_f = -w_F$.
	\end{cor}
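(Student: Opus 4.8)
The plan is to compute both dilatations directly from the definition and compare them, since the explicit form of $F$ is already pinned down by the preceding theorem. First I would read off the analytic and co-analytic parts of each mapping. For $f(z) = h(z) + \overline{g(z)}$ the definition gives the dilatation $w_f(z) = g'(z)/h'(z)$. For $F$ I would put it in the canonical decomposition $F = H + \overline{G}$; comparing with $F(z) = \alpha h(z) - \alpha\overline{g(z)}$ identifies $H(z) = \alpha h(z)$ and the co-analytic part $\overline{G(z)} = -\alpha\overline{g(z)}$. Because $\alpha$ is real, conjugating the latter identity yields $G(z) = -\alpha g(z)$.

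The next step is to differentiate and substitute into the dilatation formula $w_F = G'/H'$. From $H'(z) = \alpha h'(z)$ and $G'(z) = -\alpha g'(z)$ the common factor $\alpha$ cancels, giving $w_F(z) = -\alpha g'(z)/\bigl(\alpha h'(z)\bigr) = -g'(z)/h'(z) = -w_f(z)$, which is exactly the asserted relation $w_f = -w_F$.

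The only point requiring any care is the well-definedness of the two quotients. The hypothesis $\alpha \neq 0$ is precisely what guarantees that $H' = \alpha h'$ does not vanish identically and that the cancellation of $\alpha$ is legitimate; the standing assumption $h' \not\equiv 0$ (implicit in speaking of $w_f$ at all) ensures both denominators make sense. I do not expect a genuine obstacle: once $F$ is written in the form supplied by the previous theorem, the claim is a one-line computation, and the role of the condition $\alpha \neq 0$ is simply to keep the dilatation of $F$ defined.
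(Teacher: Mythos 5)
Your computation is correct and is exactly the argument the paper intends: the corollary is stated without a written proof, but it follows from the preceding theorem precisely by identifying $H = \alpha h$, $G = -\alpha g$ (using that $\alpha$ is real to conjugate) and cancelling $\alpha$ in $w_F = G'/H' = -g'/h' = -w_f$. Your added remark on well-definedness of the quotients is a small but legitimate point the paper leaves implicit.
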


	Now from these results one can conclude that if $f(z) = h(z) + \overline{g(z)},~ F(z),$ and $f(z)F(z)$ are complex-valued  harmonic functions on $\Omega,$ then for non-zero real constant $$F = \alpha h - \alpha \overline{g},$$ and $$fF = \alpha~ h^2 - \alpha~ \overline{g}^2.$$   Moreover,their dilatation $w_F$ of $F$ is  given by  $w_F = -w_f.$  
	
\section*{Observations:}\begin{itemize}
\item[1.] If $f, F$ and $fF$ are complex-valued harmonic functions, then $w_f = -w_F.$
\item[2.] If $f$ and $F$ are complex-valued harmonic functions and $w_F +w_f \neq 0,$  then $fF$ is not harmonic.
\item[3.] If $f, F$ and $fF$ are complex-valued harmonic functions, and $f$ is univalent, then $F$ is also univalent.

\item[4.] If $f$ and $fF$ are complex-valued harmonic functions, then $F$ need not be harmonic.
Example:$$f(z) = h(z) + \overline{g(z)} , (fF)(z)= h^3(z) + \overline{g(z)}^3$$  are both harmonic, but the following need not be harmonic$$F(z)= h^2(z) - (h\overline{g})(z) + \overline{g(z)}^2.$$ 
\item[5.] If $f$ and $F$ are complex-valued harmonic functions with $w_f = -w_F$, then $fF$ need not be harmonic.
Example: If $f(z)= h(z) + \overline{g(z)}, ~~ F(z) = \alpha h(z)- \overline{\alpha g},$ then we have $w_f = -w_F$ but $fF$ need not be harmonic unless $\alpha$ is a non-zero real constant.

\end{itemize}
	\subsection{Relation between Boundary values of Factors and the product} 
	
	\setcounter{equation}{0}
In this subsection	we prove results concerning boundary values of the factors and the product on the unit disk. 
	
	\begin{thm}
	Suppose $f(z), F(z)$ and  $(fF)(z)$ are complex-valued harmonic functions in the open unit disk and $f(z)= \phi_1(z) +i \phi_2(z)$ on the boundary of the unit disk. Then $F(z) = \alpha\phi_2(z) +i \alpha\phi_1(z) $ on the boundary of the unit disk, for some non-zero real constant $\alpha.$ 
	\end{thm}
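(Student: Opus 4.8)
The plan is to push everything through the structural identity already established for triples of harmonic functions. By the product theorem of Subsection~3.1 and its corollary, the assumption that $f$, $F$ and $fF$ are all harmonic forces $F(z) = \alpha h(z) - \alpha\overline{g(z)}$ for some nonzero real $\alpha$, where $f = h + \overline g$ is the decomposition into analytic and co-analytic parts. Because $\alpha$ is real it commutes with taking real and imaginary parts, so the theorem reduces to a single boundary identity: it suffices to show that $F = i\alpha\,\overline f$ on $\partial\mathbb D$. Indeed, with $\phi_1 = \operatorname{Re} f$ and $\phi_2 = \operatorname{Im} f$ real on $\partial\mathbb D$, one has $i\alpha\overline f = i\alpha(\phi_1 - i\phi_2) = \alpha\phi_2 + i\alpha\phi_1$, which is exactly the asserted boundary value of $F$.

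First I would record the boundary data explicitly. Writing $h = h_1 + ih_2$ and $g = g_1 + ig_2$ with real harmonic parts, the decomposition $f = h + \overline g$ gives $\phi_1 = h_1 + g_1$ and $\phi_2 = h_2 - g_2$, while $F = \alpha(h - \overline g)$ gives $\operatorname{Re} F = \alpha(h_1 - g_1)$ and $\operatorname{Im} F = \alpha(h_2 + g_2)$ on $\partial\mathbb D$. Since $\overline f = \overline h + g$, the target $F = i\alpha\overline f$ is equivalent to
\[ h - \overline g = i\,(\overline h + g) \qquad \text{on } \partial\mathbb D. \]
This is the heart of the matter, and it is where genuinely boundary (rather than interior) information must enter: the boundary values of $f$ determine only the combination $h + \overline g$, whereas $F$ depends on $h - \overline g$, so the identity cannot come from the interior relation alone.

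To supply the missing input I would use the companion boundary statement announced for the product, namely that $(fF)(z) = i\alpha(\phi_1^2 + \phi_2^2) = i\alpha|f|^2$ is purely imaginary on $\partial\mathbb D$, together with the relation $\overline z = 1/z$ valid there. Expanding $h(z) = \sum_{n\ge0} a_n z^n$ and $g(z) = \sum_{n\ge0} b_n z^n$ and matching coefficients of $z^n$ and $z^{-n}$ in the displayed identity is the concrete step I would carry out; once the coefficients are pinned down, uniqueness for the Dirichlet problem (Rudin's theorem and the Poisson representation recalled in Section~2) identifies $F$ with the harmonic extension of $i\alpha\overline f$, and rewriting $i\alpha\overline f = \alpha\phi_2 + i\alpha\phi_1$ finishes the proof.

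I expect this coefficient matching to be the main obstacle. A direct comparison of the two sides forces the simultaneous relations $a_n = ib_n$ and $b_n = ia_n$ for $n\ge1$, which are incompatible unless $a_n = b_n = 0$; this signals that the identity $F = i\alpha\overline f$, and hence the theorem in the unconditional form stated, appears to require an extra hypothesis on $f$. The natural one is that $f$ carry $\partial\mathbb D$ onto a circle, i.e.\ that $|f|$ be constant on $\partial\mathbb D$ — which is precisely the condition that makes the product's boundary value the pure imaginary constant advertised in the abstract. I would therefore make this hypothesis explicit and derive the conclusion under it, noting that without some such restriction the swap of real and imaginary parts does not seem to be forced by the harmonicity of $f$, $F$ and $fF$ alone.
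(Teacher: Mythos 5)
Your reduction to the boundary identity $F=i\alpha\overline f$ is correct, and your coefficient matching is not an ``obstacle'' to be overcome --- it is a sound disproof of the statement as printed. With $f=h+\overline g$ and $F=\alpha(h-\overline g)$, the asserted conclusion $F=\alpha\phi_2+i\alpha\phi_1=i\alpha\overline f$ on $\partial\mathbb{D}$ is equivalent to
\[
h-ig \;=\; i\,\overline{h+ig}\qquad\text{on } \partial\mathbb{D},
\]
and the left side carries only Fourier frequencies $n\ge 0$ while the right side carries only $n\le 0$; equality forces $h\pm ig$, hence $h$ and $g$, to be constant --- exactly your relations $a_n=ib_n$, $b_n=ia_n$. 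Note that this argument uses no information whatsoever about $|f|$, which is where your closing paragraph goes astray: assuming $|f|$ constant on $\partial\mathbb{D}$ cannot rescue the theorem, since the incompatible coefficient relations were derived unconditionally; the printed conclusion holds only for constant $f$. A concrete counterexample: $f=z+\overline z/2$, $F=z-\overline z/2$, $fF=z^2-\overline z^{\,2}/4$ are all harmonic, with $\phi_1=\tfrac32\cos\theta$, $\phi_2=\tfrac12\sin\theta$; the theorem predicts the boundary value $\tfrac12\sin\theta+\tfrac{3i}{2}\cos\theta$ for $F$, whereas in fact $F(e^{i\theta})=\tfrac12\cos\theta+\tfrac{3i}{2}\sin\theta$.

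The divergence from the paper is explained by a specific slip in the paper's own proof, which you could not have seen blind. The paper takes $G_1,G_2$ to be the Schwarz integrals of $\phi_1,\phi_2$ and writes $u=\operatorname{Re}G_1$ (correct, by the quoted theorem of Rudin, whose conclusion concerns the \emph{real} part of the Schwarz integral) but also $v=\operatorname{Im}G_2$ (incorrect: the boundary value of $\operatorname{Im}G_2$ is the conjugate function $\widetilde{\phi_2}$, not $\phi_2$); the same confusion recurs at the end, where the boundary value of $V=\operatorname{Im}(\alpha G_1)$ is read off as $\alpha\phi_1$ instead of $\alpha\widetilde{\phi_1}$. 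Running the paper's scheme with the correct identification $v=\operatorname{Re}G_2$ gives $h=(G_1+iG_2)/2$, $g=(G_1-iG_2)/2$, hence on $\partial\mathbb{D}$
\[
F=\alpha\bigl(-\widetilde{\phi_2}+i\,\widetilde{\phi_1}\bigr)
\]
up to additive constants: the true statement involves the harmonic conjugates of the boundary data, not the swap of $\phi_1$ and $\phi_2$, and it is nonlocal --- consistent with your correct observation that $f$'s boundary values determine only $h+\overline g$ while $F$ requires $h-\overline g$. So keep your refutation, delete the proposed salvage via $|f|\equiv 1$ on $\partial\mathbb{D}$, and note that the subsequent Corollary and Remark (boundary value of $fF$ a pure imaginary constant $\pm i$) inherit the same defect: in the example above $fF$ has boundary value $\tfrac34\cos 2\theta+\tfrac{5i}{4}\sin 2\theta$, which is neither constant nor purely imaginary.
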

	\begin{proof}
	Suppose $f(z)= u+iv =h(z) + \overline{g(z)} $  and satisfies 
	\[	
\begin{cases}
		\Delta f(z) = 0   & \text{if } z\in \mathbb{D} \\
		f(z)=\phi_1(z)+i\phi_2(z)   & \text{if } z \in \partial \mathbb{D}
	\end{cases} 	
\]
	 Then $u$  is the real parts of some analytic function say, $G_1$ and $v$ is the imaginary parts of some analytic function say, $G_2.$ Moreover, $G_1,u, G_2, v$ are given as follows:
	$$G_1(z)=\frac{1}{2\pi}\int_{-\pi}^{\pi}\frac{e^{it}+z}{e^{it}-z}\phi_1(e^{it})dt $$ 
Since $u$ is the real part of $G_1$, we have $$u = Re(G_1)= \frac{G_1+\overline{G_1}}{2} $$
	$$G_2(z)=\frac{1}{2\pi}\int_{-\pi}^{\pi}\frac{e^{it}+z}{e^{it}-z}\phi_2(e^{it})dt $$ 
 Since $v$ is the imaginary part of $G_2$, we have $$v = Im(G_2)= \frac{G_2-\overline{G_2}}{2i}. $$
 Thus the function $f(z)$ has the following form
 $$f(z)= h(z) + \overline{g(z)}=u+iv=\frac{G_1+\overline{G_1}}{2}+i \frac{G_2-\overline{G_2}}{2i}.$$
 From this we obtain $$h(z)= \frac{G_1+G_2}{2},~~~~~~g(z)= \frac{G_1-G_2}{2}$$ But then, $F(z)= \alpha h(z)-\alpha \overline{g(z)}$, implies that $$F(z)= \alpha \frac{G_1+G_2}{2}-\alpha\overline{\frac{G_1-G_2}{2}}=\alpha \frac{G_1-\overline{G_1}}{2}+\alpha \frac{G_2+\overline{G_2}}{2} $$ Therefore $F(z)$ can be rewritten in the following form
 $$F(z)=\alpha Re(G_2)+i\alpha Im(G_1)= U+iV$$
 From which we obtain,
 $$U= \alpha Re(G_2)  , ~~~~  V= \alpha Im(G_1).$$
 So U is the real part of (using the expressions of $G_2$) the following
 $$\alpha G_2(z)= \alpha \frac{1}{2\pi}\int_{-\pi}^{\pi}\frac{e^{it}+z}{e^{it}-z}\phi_2(e^{it})dt =\frac{1}{2\pi}\int_{-\pi}^{\pi}\frac{e^{it}+z}{e^{it}-z}\alpha \phi_2(e^{it})dt $$
  and  $V$ is the imaginary part of (using the expressions of $G_1$) the following,
  
  $$\alpha G_1(z)= \alpha \frac{1}{2\pi}\int_{-\pi}^{\pi}\frac{e^{it}+z}{e^{it}-z}\phi_1(e^{it})dt =\frac{1}{2\pi}\int_{-\pi}^{\pi}\frac{e^{it}+z}{e^{it}-z}\alpha \phi_1(e^{it})dt. $$
 Therefore, we obtain 
  \[	
\begin{cases}
		\Delta F(z) = 0  & \text{if } z\in \mathbb{D} \\
		F(z)=\phi_2(z)+i\phi_1(z)   & \text{if } z \in \partial \mathbb{D}
	\end{cases}
	\]
 which is the desired result.

	\end{proof}
	Observe that the real part of the boundary value of $f$ becomes the imaginary part of the boundary value of $F$ and the imaginary part of the boundary value of $f$ becomes the real part of the boundary value of $F.$
	\begin{cor}
	Suppose $f, F$ and the product $fF$ are complex-valued harmonic functions in the open unit disk and $f(z)= \phi_1(z) +i \phi_2(z)$ on the boundary of the unit disk. Then $(fF)(z)=i\alpha (\phi_1^2+\phi_2^2)$ on the boundary of the unit disk.
	\end{cor}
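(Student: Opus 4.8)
The plan is to reduce the claim to a direct multiplication of boundary values, leaning entirely on the preceding theorem. By that theorem, under the stated hypotheses the factor $F$ has boundary values $F(z) = \alpha\phi_2(z) + i\alpha\phi_1(z)$ on $\partial\mathbb{D}$ for some nonzero real constant $\alpha$, where $f(z) = \phi_1(z) + i\phi_2(z)$ on $\partial\mathbb{D}$. Since $f$, $F$ and $fF$ are complex-valued harmonic in $\mathbb{D}$ and, being Poisson integrals of their continuous boundary data, extend continuously to $\overline{\mathbb{D}}$, the boundary value of the product equals the pointwise product of the boundary values of the factors; that is, $(fF)(z) = f(z)\,F(z)$ for $z \in \partial\mathbb{D}$.

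First I would substitute the two boundary expressions and expand:
\[
(fF)(z) = \bigl(\phi_1(z) + i\phi_2(z)\bigr)\bigl(\alpha\phi_2(z) + i\alpha\phi_1(z)\bigr), \qquad z \in \partial\mathbb{D}.
\]
Carrying out the multiplication and using $i^2 = -1$, the two real cross terms $\alpha\phi_1\phi_2$ and $-\alpha\phi_1\phi_2$ cancel, while the remaining terms combine into $i\alpha\phi_1^2 + i\alpha\phi_2^2$. This yields $(fF)(z) = i\alpha\bigl(\phi_1^2(z) + \phi_2^2(z)\bigr)$ on $\partial\mathbb{D}$, which is the desired identity, and it is manifestly purely imaginary since $\alpha$, $\phi_1$, $\phi_2$ are real.

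As a consistency check I would confirm this against the factorization $fF = \alpha h^2 - \alpha\overline{g}^2$ established earlier: writing $h = (G_1 + G_2)/2$ and $g = (G_1 - G_2)/2$ as in the proof of the preceding theorem and evaluating on the boundary should reproduce the same pure-imaginary expression. There is no serious obstacle here, as the entire content is contained in the preceding theorem and the corollary is a one-line algebraic consequence. The only point that warrants a word of justification is the passage from a product of harmonic functions to a product of their boundary traces, which is immediate from the continuity of $f$ and $F$ on $\overline{\mathbb{D}}$.
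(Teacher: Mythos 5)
Your proposal is correct and follows essentially the same route as the paper: both invoke the preceding theorem to obtain $F(z)=\alpha\phi_2(z)+i\alpha\phi_1(z)$ on $\partial\mathbb{D}$, multiply the boundary expressions, and observe that the real cross terms cancel to leave $i\alpha\bigl(\phi_1^2(z)+\phi_2^2(z)\bigr)$. Your extra remark justifying why the boundary trace of the product is the product of the boundary traces (via continuity on $\overline{\mathbb{D}}$) is a small refinement the paper leaves implicit.
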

	\begin{proof}
 Since $f(z)=\phi_1(z) +i \phi_2(z)$ and $F(z)= \alpha \phi_2+i\alpha \phi_1$ on the boundary (as shown in Theorem 3.1), we have $$(fF)(z)=(\phi_1(z) +i \phi_2(z))(\alpha \phi_2+i\alpha \phi_1).$$
 From this we obtain $$(fF)(z)=i\alpha (\phi_1^2+\phi_2^2).$$ Therefore, we have 
 \[	
\begin{cases}
		\Delta (fF)(z) = 0  & \text{if } z\in \mathbb{D} \\
		(fF)(z)= i\alpha (\phi_1^2(z)+\phi_2^2(z))   & \text{if } z \in \partial \mathbb{D}.
	\end{cases}
	\]
	\end{proof}
\section*{Remark:}	Since $f(z)= \phi_1(z) +i \phi_2(z)$ on the boundary of the unit disk we must have $\phi_1^2(z)+\phi_2^2(z)=1.$ Thus $fF$ must be a constant on the boundary which is either $i$ or $-i$, depending on the value of $\alpha,$ as the value of $\alpha$ is either 1 or -1.
	
	\subsection{Maximum Number of Zeros of the Product}
It has been shown that, if $f, F$ and $fF$ are complex-valued harmonic functions  on a domain in the complex plane and $f(z)=h(z)+\overline{g(z)}, $ then
$F(z)= \alpha h(z)-\alpha \overline{g(z)}$ and $(fF)(z)= \alpha h^2(z)-\alpha \overline{g}^2(z),$ where $\alpha$ is non-zero real number. Thus if we know the maximum number of zeros of $f$ in the complex plane, then the number of zeros of the product $fF$ is easily determined. In particular, if $h$ and $g$ are complex-valued analytic polynomials with degree $n$ and $m$ respectively satisfying $n > m,$ then $f$ is a complex-valued harmonic polynomial with degree $n$ and $fF$ is also a complex-valued harmonic polynomial with degree $2n.$
\begin{thm}
Suppose $fF$ and all its factors $f$ and $F$ are complex-valued harmonic polynomials and assume $f$ is a polynomial of degree $n$. Then $fF$ is a complex-valued harmonic polynomial with degree $2n$ and has at most $2n^2$ zeros in the complex plane.

\end{thm}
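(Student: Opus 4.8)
The plan is to exploit the explicit factored form of $fF$ recalled at the start of this subsection rather than treating $fF$ as a single harmonic polynomial of degree $2n$. Writing $f(z)=h(z)+\overline{g(z)}$ with $\deg h = n$ and $\deg g = m < n$, the characterization already established gives $F(z)=\alpha h(z)-\alpha\overline{g(z)}$ and $(fF)(z)=\alpha h^2(z)-\alpha\overline{g}^2(z)$ for a nonzero real constant $\alpha$. First I would record that $fF$ is genuinely a harmonic polynomial of degree $2n$: since $\alpha$ is real we may write $(fF)(z)=H(z)+\overline{G(z)}$ with $H=\alpha h^2$ an analytic polynomial of degree $2n$ and $G=-\alpha g^2$ an analytic polynomial of degree $2m<2n$, so the analytic part strictly dominates and $\deg(fF)=2n$.

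The decisive observation is that one should \emph{not} apply Wilmshurst's theorem directly to $fF$, since that would only yield the weaker bound $(2n)^2=4n^2$. Instead, because $\mathbb{C}$ is a field with no zero divisors, we have $(fF)(z)=0$ if and only if $f(z)=0$ or $F(z)=0$; hence the zero set of $fF$ is exactly the union of the zero set of $f$ and the zero set of $F$, and its cardinality is at most the sum of the two cardinalities.

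It then remains to bound the zeros of each factor separately by Wilmshurst's theorem. The factor $f=h+\overline{g}$ has analytic part of degree $n$ and co-analytic part of degree $m<n$, so it has at most $n^2$ zeros. The factor $F=\alpha h-\alpha\overline{g}=\alpha h+\overline{-\alpha g}$ likewise has analytic part $\alpha h$ of degree $n$ and co-analytic part $-\alpha\overline{g}$ of degree $m<n$, so it too has at most $n^2$ zeros. Combining these with the union bound gives at most $n^2+n^2=2n^2$ zeros for $fF$, which is the asserted estimate.

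The step I expect to require the most care is the verification that Wilmshurst's theorem applies to \emph{both} factors, which hinges on the strict inequality $n>m$ between the degrees of $h$ and $g$; this is exactly the hypothesis carried over from the construction, and it is what keeps each factor out of the degenerate equal-degree case that may have infinitely many zeros. A minor point worth stating explicitly is that the union bound counts zeros as points, and that any common zero of $f$ and $F$ forces $h$ and $g$ to vanish simultaneously (from $h+\overline{g}=0$ and $h-\overline{g}=0$), so overlaps can only decrease the count and the estimate $2n^2$ is safe.
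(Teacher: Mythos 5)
Your proposal follows essentially the same route as the paper's own proof: factor the zero set of $fF$ as the union of the zero sets of $f$ and $F$, apply Wilmshurst's theorem to each factor separately (each having analytic part of degree $n$ dominating the co-analytic part of degree $m<n$), and add the two bounds to get $n^2+n^2=2n^2$. Your write-up is in fact slightly more careful than the paper's --- you explicitly verify $\deg(fF)=2n$, flag the need for $n>m$ to keep each factor out of Wilmshurst's degenerate equal-degree case, and note that common zeros only tighten the union bound --- but the underlying argument is identical.
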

\begin{proof}
The zeros of $fF$ is the sum of the zeros of $f$ and $F$ counting with multiplicities. Which is equal to at most $n^2+n^2=2n^2,$ since $f(z)=h(z)+\overline{g(z)} $ has at most $n^2$ zeros and $F(z)= \alpha h(z)-\alpha \overline{g(z)}$ has also at most $n^2$ zeros.
\end{proof}		
Observe that the degree of $fF$ is the sum of the degrees of $f$ and $F$ which is $2n,$ and so $fF$ has at most $(2n)^2=4n^2$ zeros, in general by Wilmshurst \cite{WA}, but here it is less than half of what is known. 
\section{conclusion}
In this paper we have shown that if a complex-valued harmonic function is the product of two complex-valued harmonic functions, then it is the difference of two squares, one is analytic and the other is co-analytic. It is also shown that if  the two factors are complex-valued harmonic functions, then their dilatation differ only by sign, so that  if one of the factors is given the other factor is completely determined.  In addition to this, we determined the boundary values of one of the factors and the product provided the boundary value of the other factor is known. In particular the boundary value of the product is a constant. Moreover, as an auxiliary result we determined the maximum number of zeros of the product provided all the factors complex-valued harmonic polynomials. \\


\begin{thebibliography}{99}
	
	\bibitem{AD}Bshouty, D., Lyzzaik, A., and Weitsman, A. (2012). On the boundary behaviour of univalent harmonic mappings. Ann. Acad. Sci. Fenn. Math, 37, 135-147	
	\bibitem{bshouty2010problems}Bshouty, D., and Lyzzaik, A. (2010). Problems and conjectures in planar harmonic mappings. J. Anal, 18, 69-81.

	\bibitem{bshouty1995exact}Bshouty, D., Hengartner, W., \& Suez, T. (1995). The exact bound on the number of zeros of harmonic polynomials. Journal d'Analyse Mathématique, 67(1), 207-218.
	
\bibitem{brilleslyper2020zeros}Brilleslyper, M., Brooks, J., Dorff, M., Howell, R., \& Schaubroeck, L. (2020). Zeros of a one-parameter family of harmonic trinomials. Proceedings of the American Mathematical Society, Series B, 7(7), 82-90.	
	
	
	\bibitem{dehmer2006location}Dehmer, M. (2006). On the location of zeros of complex polynomials. Journal of Inequalities in Pure and Applied Mathematics, 7(1).
	
	
\bibitem{dorff2012anamorphosis}Dorff, M., and Rolf, J. S. (2012). Anamorphosis, mapping problems, and harmonic univalent functions. Explorations in complex analysis, 197-269.
\bibitem{duren2004harmonic}Duren, P. (2004). Harmonic mappings in the plane (Vol. 156). Cambridge university press.
\bibitem{duren2001univalent}Duren, P. L. (2001). Univalent functions (Vol. 259). Springer Science and Business Media.
	\bibitem{JT}J. Clunie and T. Sheil-Small. Harmonic univalent functions. Annales Acaemiae Scientiarum Fenniacae, Volume 9, 3-25, 1984 Series A. I. Mathematica.
	\bibitem{kennedy1940bounds}Kennedy, E. C. (1940). Bounds for the Roots of a Trinomial Equation. The American Mathematical Monthly, 47(7), 468-470.
	\bibitem{geleta2022}Legesse Geleta H., and Alemu, O. A. (2022). Location of the Zeros of Certain Complex-Valued Harmonic Polynomials. Journal of Mathematics, 2022.
	\bibitem{RS}Laugesen, R. S. (1997). Planar harmonic maps with inner and Blaschke dilatations. Journal of the London Mathematical Society, 56(1), 37-48.
	\bibitem{RM}Raymond Mortini (2002). When is the product of two planar harmonic mappings harmonic? Lecturas Matematicas 23, 5-10.
	\bibitem{WA}	Wilmshurst, A.S.: Complex harmonic mappings and the valence of harmonic polynomials, D. Phil. thesis. University
of York, York (1994)
	\bibitem{WG}W. Hengartner and G. Schober.  On the boundary behavior of orientation preserving harmonic mappings. Complex variables 5 (1986) 197-208. 	
\bibitem{WH}W. K. Hayman, Tewodros Amdeberhan and Irl C.Bivens (1999). Harmonic products of harmonic functions. American Mathematical Monthly  106(8), 782-782. Taylor and Francis Ltd.
\bibitem{WR} W. Rudin, Real and Complex analysis, Third edition McGRAW-Hill Series in higher Mathematics. 
		
	

	\end{thebibliography}
\end{document}